\begin{document}
%Commands Used%

\newcommand{\ci}[1]{_{ {}_{\scriptstyle #1}}}
\newcommand{\ti}[1]{_{\scriptstyle \text{\rm #1}}}

\newcommand{\norm}[1]{\ensuremath{\|#1\|}}
\newcommand{\abs}[1]{\ensuremath{\vert#1\vert}}
\newcommand{\nm}{\,\rule[-.6ex]{.13em}{2.3ex}\,}

\newcommand{\lnm}{\left\bracevert}
\newcommand{\rnm}{\right\bracevert}

\newcommand{\p}{\ensuremath{\partial}}
\newcommand{\pr}{\mathcal{P}}

\newcounter{vremennyj}

\newcommand\cond[1]{\setcounter{vremennyj}{\theenumi}\setcounter{enumi}{#1}\labelenumi\setcounter{enumi}{\thevremennyj}}

\newcommand{\pbar}{\ensuremath{\bar{\partial}}}
\newcommand{\db}{\overline\partial}
\newcommand{\D}{\mathbb{D}}
\newcommand{\T}{\mathbb{T}}
\newcommand{\C}{\mathbb{C}}
\newcommand{\N}{\mathbb{N}}
\newcommand{\bP}{\mathbb{P}}

\newcommand{\bS}{\mathbf{S}}
\newcommand{\bk}{\mathbf{k}}

\newcommand\cE{\mathcal{E}}
\newcommand\cP{\mathcal{P}}
\newcommand\cC{\mathcal{C}}
\newcommand\cH{\mathcal{H}}
\newcommand\cU{\mathcal{U}}
\newcommand\cQ{\mathcal{Q}}

\newcommand{\be}{\mathbf{e}}

\newcommand{\la}{\lambda}
\newcommand{\e}{\varepsilon}

\newcommand{\td}{\widetilde\Delta}

\newcommand{\tto}{\!\!\to\!}
\newcommand{\wt}{\widetilde}
\newcommand{\shto}{\raisebox{.3ex}{$\scriptscriptstyle\rightarrow$}\!}

\newcommand{\La}{\langle }
\newcommand{\Ra}{\rangle }
\newcommand{\ran}{\operatorname{ran}}
\newcommand{\tr}{\operatorname{tr}}
\newcommand{\codim}{\operatorname{codim}}
\newcommand\clos{\operatorname{clos}}
\newcommand{\spn}{\operatorname{span}}
\newcommand{\lin}{\operatorname{Lin}}
\newcommand{\rank}{\operatorname{rank}}
\newcommand{\re}{\operatorname{Re}}
\newcommand{\vf}{\varphi}
\newcommand{\f}{\varphi}

%%%%%%%%%%%%%%%%%%%%%%%%%%%%

\newcommand{\entrylabel}[1]{\mbox{#1}\hfill}

\newenvironment{entry}
{\begin{list}{X}%
  {\renewcommand{\makelabel}{\entrylabel}%
      \setlength{\labelwidth}{55pt}%
      \setlength{\leftmargin}{\labelwidth}%\labelsep}%
      \addtolength{\leftmargin}{\labelsep}%
   }%
}%
{\end{list}}

%%%%%%%%%%%%%%%%%%%%%%%%%%%%

%\newenvironment{theorem}{\newline\begin{sloppypar}\noindent\textbf{Theorem}}{\hspace*{\fill}\ %\end{sloppypar}}

\numberwithin{equation}{section}

\newtheorem{thm}{Theorem}[section]
\newtheorem{lm}[thm]{Lemma}
\newtheorem{cor}[thm]{Corollary}
\newtheorem{prop}[thm]{Proposition}

\theoremstyle{remark}
\newtheorem{rem}[thm]{Remark}
\newtheorem*{rem*}{Remark}

\title[Similarity]{Similarity of Cowen-Douglas operators to the backward Dirichlet shift}
\author{Hyun-Kyoung Kwon}
\thanks{The work of H.~Kwon was supported by the CARSCA grant of the University of Alabama.}
\address{Department of Mathematics\\ The University of Alabama\\ Tuscaloosa, AL USA 35487}
%\email{hkwon5@as.ua.edu}

\subjclass[2000]{Primary 47A99, Secondary 47B32, 30D55, 53C55}

\begin{abstract}
We show that the same similarity characterization obtained for Cowen-Douglas operators to the backward shift operators on reproducing kernel Hilbert spaces with analytic kernels can be used to describe similarity in the Dirichlet space setting. As in previous proofs, a model theorem that allows one to get the eigenvector bundle structure of the operator plays a crucial role.
\end{abstract}

\maketitle
\setcounter{tocdepth}{1}
\tableofcontents
\section*{Notation}
%\addcontentsline{toc}{section}{Notation}
\begin{entry}
\item[$:=$] equal by definition;\medskip
\item[$\C$] the complex plane;\medskip
\item[$\D$] the unit disk, $\D:=\{z\in\C:\abs{z}<1\}$;\medskip
\item[$\T$] the unit circle, $\T:=\p\D=\{z\in\C:\abs{z}=1\}$;\medskip
\item[$\frac{\p}{\p z}, \frac{\p}{\p \overline z}$] $\p$ and $\db$ derivatives,
$\frac{\p}{\p z}  := (\frac{\p}{\p x} - i \frac{\p}{\p y})/2$, $\frac{\p}{\p \overline z}
 := (\frac{\p}{\p x} + i \frac{\p}{\p y})/2$; \medskip
\item[$\Delta$]normalized Laplacian, $\Delta = \db \p = \p
\db = \frac14\left(\frac{\p^2}{\p x^2} + \frac{\p^2}{\p y^2} \right)$;\medskip
\item[$\mathfrak{S}_2$]Hilbert-Schmidt  class of operators;\medskip
\item[$\norm{\cdot}, \nm\cdot \nm$]  norms, since we are dealing with matrix- and operator-valued functions, we will use the symbol $\|\,.\,\|$ (usually with a
subscript) for the norm in a function space, while $\nm\,.\,\nm$
is used for the norm in the underlying vector (operator) space.
 Thus, for a vector-valued function $f$, the symbol $\|f\|_2$ denotes its $L^2$-norm, but the symbol $\nm f\nm$ stands
for the scalar-valued function whose
value at a point $z$ is the norm of the vector $f(z)$;  \medskip

\item[$H^\infty$] the Hardy space of functions bounded and analytic on $\mathbb{D}$; \medskip

\item[$H^\infty_{\! \mathcal{E_*}\shto \mathcal{E}}$] the operator Hardy class of bounded analytic
 functions whose values are bounded operators from a Hilbert space $\mathcal{E}_*$ to another Hilbert space $\mathcal{E}$,
$$
\|F\|_\infty := \sup_{z\in \D}
\nm F(z)\nm=\underset{\xi\in
\T}{\operatorname{esssup}}\nm F(\xi)\nm,$$
where $\hat{F}(\xi)=\lim_{r \rightarrow 1^{-}} F(r\xi);$ and \medskip

\item[$T_\Phi$] the Toeplitz operator with symbol $\Phi$ (possibly operator-valued). \medskip

\end{entry}

\setcounter{section}{-1}

\section{Introduction}
In previous work, partial results to the problem of the similarity of Cowen-Douglas operators in terms of curvatures of the corresponding eigenvector bundles were obtained when one restricts attention to the similarity to the backward shift operator on the Hardy space or the weighted Bergman spaces \cite{DouglasKwonTreil}, \cite{KwonTreil}. Unlike the case for unitary equivalence that was completely solved by M. J. Cowen and R. G. Douglas in \cite{CowenDouglas}, a characterization for similarity is still far from being complete and the starting point for research in this direction comes from the backward shift operator defined on holomorphic function spaces that are of interest to operator theorists, the Hardy and the Bergman spaces- recall that the backward shift operator is not only simple to work with but also serves as an important model \cite{Agler1}, \cite{Agler2}, \cite{NagyFoias1}. In the present paper, we focus on the backward shift operator on the remaining holomorphic function space that attracts much attention, the Dirichlet space.

As in previous work, we first get the eigenvector structure of the operators involved and then as noted by B. Sz-Nagy and C. Foias in \cite{NagyFoias1}, \cite{NagyFoias2}, and \cite{NagyFoias3}, use the relationship between the similarity and corona problems to obtain a similarity characterization involving the curvatures of the bundles. It will be shown that the same characterization that holds in the Hardy and Bergman spaces is true in the Dirichlet space setting.

\section{Preliminaries}
The Dirichlet space $\mathcal{D}$ consists of all analytic functions $f(z)=\sum_{n=0}^{\infty} \hat{f}(n) z^n$ defined on the unit disk $\mathbb{D}$ of the complex plane satisfying
$$\|f\|^2=\sum_{n=0}^{\infty} (n+1)|\hat{f}(n)|^2
 < \infty.
$$
Just like the Hardy and weighted Bergman spaces, the Dirichlet space $\mathcal{D}$ is a reproducing kernel Hilbert space with reproducing kernel
$$
k_{\lambda}(z)= \frac{1}{\bar{\lambda}z}\log \frac{1}{1-\bar{\lambda}z}=1+\frac{1}{2}{\bar{\la}z}+\frac{1}{3}{\bar{\la}}^2z^2+\cdots.
$$
We can define $\mathcal{D}_{\mathcal{E}}$, the vector-valued analogue of $\mathcal{D}$ taking values in a Hilbert space, in an obvious way. We will write $\mathcal{D}_n$ when $\mathcal{E}$ is of dimension $n$.

The operator of multiplication by $z$ on $\mathcal{D}$, denoted $D$, is a bounded linear operator. We will denote its adjoint by $D^*$ and call it the backward shift
operator. We can define $D^*_{\mathcal{E}}$ is an analogous way and by taking note that since an orthonormal basis for $\mathcal{D}$ is given by $\{\frac{z^n}{\sqrt{n+1}}\}_{n=0}^{\infty}$, conclude that $D^*_{\mathcal{E}}$ can be viewed as the weighted backward shift
$$
S_{\alpha}(x_0,x_1,x_2, \cdots)=({\frac{\sqrt 2}{\sqrt 1}} x_1, {\frac{\sqrt 3}{\sqrt 2}} x_2, {\frac{\sqrt 4}{\sqrt 3}} x_3, \cdots),
$$
corresponding to the weight sequence ${\alpha}=({\alpha}_n)_{n=1}^{\infty}=(\frac{\sqrt{n+1}}{\sqrt n})_{n=1}^{\infty}$. Moreover,
$$
D^*k_{\bar{\lambda}}=\lambda k_{\bar{\lambda}},
$$
for all $\lambda \in \mathbb{D}$, so that the eigenvectors of $D^*_{\mathcal{E}}$ corresponding to the eigenvalue $\lambda \in \mathbb{D}$ are $k_{\bar{\lambda}}e$ for $e \in \mathcal{E}$.

\section{Main Results}
Let $\mathcal{H}$ be a separable Hilbert space. We assume the following about the operator $T \in \mathcal{L}(\mathcal{H})$ in consideration:
\begin{enumerate}

\item
$\sum_{n=1}^{\infty} \frac{\|T^n\|^2}{n+1} \leq 1$;

\item
$\bigvee_{\lambda \in \mathbb{D}} \ker (T-\lambda)=\mathcal{H}$ ; and

\item the subspaces $\ker(T-\lambda)$   depend analytically on the spectral parameter
$\la \in \D$.\
\end{enumerate}

\medskip
By assumption \cond3, we have for each $\lambda \in\D$, a neighborhood $U_{\lambda}$ of $\lambda$ and an operator-valued analytic function $F_{\lambda} \in H^{\infty}_{\mathcal{H} \rightarrow \mathcal{H}}$ defined on $U_{\lambda}$ with $\ran F_{\lambda}(\omega)=\ker(T-\omega)$ for $\omega \in U_{\lambda}$. This analytic function $F_\lambda$ has a left inverse in $L^{\infty}_{\mathcal{H} \rightarrow \mathcal{H}}$ and it can be easily shown that $\dim \ker(T-\lambda)$ is constant for all $\lambda \in \mathbb{D}$. Note that a Cowen-Douglas operator in $B_m(\mathbb{D})$ for $m$ a positive integer, will satisfy assumptions \cond2 and \cond 3 (cf. \cite{CowenDouglas}). The disjoint union
$$
E_T=\coprod_{\lambda \in \mathbb{D}} \ker(T-\lambda)=\{(\lambda, v_{\lambda}): \lambda \in \mathbb{D}, v_{\lambda} \in \ker (T-\lambda)\}
$$
is a Hermitian holomorphic vector bundle over $\mathbb{D}$ with the metric inherited from $\mathcal{H}$ and the natural projection $\pi$, $\pi(\lambda, v_{\lambda})=\lambda$.

In order to state the main result of the paper, we define a $\mathcal{C}^{\infty}$ function $\Pi$ defined on $\mathbb{D}$, where
$$
\Pi(\lambda)=\ker(T-\lambda),
$$
for $\lambda \in  \mathbb{D}$.
The following theorem is the main result of the paper:
\begin{thm} \label{t0.1} Let $T \in \mathcal{L}(\mathcal{H})$ satisfy assumptions (1) through (3). If
$$\dim\ker(T-\lambda)=n<\infty$$ for every $\lambda\in\D$, and $\Pi(\lambda)$ denotes the orthogonal projection onto
$\ker(T-\lambda)$, then the following statements are equivalent:

\begin{enumerate}
\item
T is similar to the backward shift operator $D^*_n$ on the vector-valued space $\mathcal{D}_n$, i.e., there exists an invertible operator $A: \mathcal{D}_n \rightarrow \mathcal{H}$ satisfying $TA=AD^*_n$;

\item
The eigenvector bundles of $T$ and $D^*_n$ are ``uniformly equivalent''. i.e., there exists a holomorphic bundle map bijection $\Psi$ from the eigenvector bundle of
$D^*_n$ to that of $T$ satisfying
$$
\frac{1}{c} \| v_{\lambda}  \|_{\mathcal{D}_n} \leq \| \Psi ( v_{\lambda} )\|
_{\mathcal{H}} \leq c \| v_{\lambda} \| _{\mathcal{D}_n}
$$
for some constant $c>0$ and for all $v_{\lambda} \in \ker(D^*_n -\lambda)$ and all $\la\in \D$;

\item
There exists a bounded subharmonic function $\vf$ defined on $\mathbb{D}$ such that
$$
\Delta \vf (\lambda) \geq \left \bracevert \frac{\partial\Pi(\lambda)}{\partial
\lambda}\right \bracevert ^2_{\mathfrak{S}_2}+\frac{n[\log(1-|\lambda|^2)+|\lambda|^2]}{[\log(1-|\lambda|^2)(1-|\lambda|^2)]^2}
\quad \text{ for all } \lambda \in \D.
$$

\end{enumerate}

\end{thm}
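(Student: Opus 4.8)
The plan is to recognize the statement as an instance of the similarity characterization for backward shifts on reproducing kernel Hilbert spaces with analytic kernels of the type established in \cite{DouglasKwonTreil,KwonTreil}, and to carry out the two space-specific checks it requires: that the Dirichlet kernel $k_\lambda(z)=\frac{1}{\bar\lambda z}\log\frac{1}{1-\bar\lambda z}=\sum_{n\ge0}\frac{(\bar\lambda z)^n}{n+1}$ is analytic (it is, by the displayed power series), so the abstract machinery applies, and that the curvature term attached to $D_n^*$ in statement (3) is the one written there. I would prove the cycle $(1)\Rightarrow(2)\Rightarrow(1)$ together with $(1)\Leftrightarrow(3)$.

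The implication $(1)\Rightarrow(2)$ is immediate. If $A\colon\mathcal D_n\to\mathcal H$ is invertible with $TA=AD_n^*$, then $D_n^*v=\lambda v$ gives $T(Av)=AD_n^*v=\lambda(Av)$, so $A$ sends $\ker(D_n^*-\lambda)$ into $\ker(T-\lambda)$; applying the same to $A^{-1}$, which intertwines $D_n^*$ and $T$, shows $A$ maps $\ker(D_n^*-\lambda)$ \emph{onto} $\ker(T-\lambda)$ bijectively. Since $\lambda\mapsto k_{\bar\lambda}$ is analytic into $\mathcal D$ and $A$ is bounded linear, $\Psi:=A|_{E_{D_n^*}}$ is a holomorphic bundle bijection, and from $\|A^{-1}\|^{-1}\|v_\lambda\|_{\mathcal D_n}\le\|Av_\lambda\|_{\mathcal H}\le\|A\|\,\|v_\lambda\|_{\mathcal D_n}$ one gets (2) with $c=\max(\|A\|,\|A^{-1}\|)$.

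For $(2)\Rightarrow(1)$ and $(3)\Rightarrow(1)$ the engine is the model theorem: assumptions (1)--(3) let one realize the operator $T$ concretely on a reproducing kernel Hilbert space naturally associated to $\mathcal D_n$, with assumption (1) being precisely the normalization that makes the canonical map built from the powers of $T$ into $\mathcal D_n$ a contraction, so that the eigenbundle $E_T$ is described through that model in terms of the eigenbundle of $D_n^*$. A uniformly equivalent bundle map then upgrades, via the model, to a bounded analytic operator-valued function intertwining the two pictures, and, following the Sz.-Nagy--Foias link between similarity and the corona problem \cite{NagyFoias1,NagyFoias2,NagyFoias3}, the existence of an invertible intertwiner $A$ with $TA=AD_n^*$ is equivalent to a left-invertibility (corona-type) property of that function in the multiplier algebra of $\mathcal D_n$. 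This corona property is in turn characterized, by a Green's-formula $/\ \bar\partial$-with-bounds argument as in \cite{DouglasKwonTreil,KwonTreil}, by a bounded subharmonic majorant: the size of the obstruction is measured by $\log\det G_T(\lambda)$, the log-Gram determinant of a local holomorphic frame of $E_T$, whose Laplacian equals $\|\partial\Pi(\lambda)/\partial\lambda\|^2_{\mathfrak{S}_2}$, while the $D_n^*$-side contributes the Laplacian of $\log\det G_{D_n^*}(\lambda)=n\log k_\lambda(\lambda)$ with a minus sign; assembling these gives exactly the inequality in statement (3), so that $(1)\Leftrightarrow(2)\Leftrightarrow(3)$.

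It remains to compute that $D_n^*$-term. Since $\|k_{\bar\lambda}\|^2_{\mathcal D}=k_\lambda(\lambda)=-\log(1-|\lambda|^2)/|\lambda|^2$, setting $t=|\lambda|^2$ and using $\Delta g(t)=(t\,g'(t))'$ for radial $g$, a short differentiation gives
$$
-\,n\,\Delta\log k_\lambda(\lambda)=\frac{n\bigl[\log(1-|\lambda|^2)+|\lambda|^2\bigr]}{\bigl[\log(1-|\lambda|^2)(1-|\lambda|^2)\bigr]^2},
$$
which is precisely the second summand on the right of statement (3) and is the trace of the curvature of the eigenbundle of $D_n^*$. Substituting this into the abstract characterization finishes the proof. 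The main obstacle is not the computation but the transfer of the corona$/\bar\partial$ apparatus to the Dirichlet setting: the Dirichlet space's multiplier algebra and its Carleson measures are markedly different from those of $H^2$ and the weighted Bergman spaces, so one must verify that the model theorem and the Green's-formula solvability criterion still operate for the kernel $k_\lambda$ --- the normalization $\sum_{n\ge1}\|T^n\|^2/(n+1)\le1$ in assumption (1) is chosen exactly so that the relevant map into $\mathcal D_n$ is contractive, which is what keeps that apparatus available.
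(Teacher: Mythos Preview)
Your proposal shares the paper's main ingredients---Muller's model theorem to realize $T$ as $D^*_{\mathcal E}|\mathcal K$, the curvature computation for the Dirichlet kernel, and the analytic-projection/corona bridge from \cite{DouglasKwonTreil,KwonTreil,TreilWick}---but the route differs in two respects worth noting. First, the paper runs the cycle $(1)\Rightarrow(2)\Rightarrow(3)\Rightarrow(1)$, and its $(2)\Rightarrow(3)$ is explicit and needs no corona argument: the bundle map forces $\Psi(k_{\bar\lambda}e)=k_{\bar\lambda}F(\lambda)e$ with $F\in H^\infty_{\mathbb C^n\to\mathcal E}$ satisfying $c^{-1}I\le F^*F\le cI$, and one simply sets $\varphi(\lambda)=\nm F(\lambda)\nm^2_{\mathfrak S_2}$, so that $\Delta\varphi=\nm F'\nm^2_{\mathfrak S_2}$ dominates $\nm\partial\Pi_2/\partial\lambda\nm^2_{\mathfrak S_2}$. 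The reduction from $\Pi$ to $\Pi_2$ comes from the tensor splitting $\Pi=\Pi_1\otimes\Pi_2$ (with $\Pi_1$ the rank-one projection onto $\mathbb C k_{\bar\lambda}$), which yields $\nm\partial\Pi/\partial\lambda\nm^2_{\mathfrak S_2}=n\nm\partial\Pi_1/\partial\lambda\nm^2+\nm\partial\Pi_2/\partial\lambda\nm^2_{\mathfrak S_2}$; this is the paper's concrete substitute for your log-Gram-determinant bookkeeping and is exactly what isolates the Dirichlet curvature term. Second, for $(3)\Rightarrow(1)$ the paper invokes the Treil--Wick theorem, which is a statement about projection-valued $C^2$ functions on $\D$ and does not depend on any particular multiplier algebra, to obtain a bounded analytic projection $\mathcal P$ onto $\mathcal E(\lambda)$; it then builds the intertwiner $A$ as the Toeplitz operator with symbol $\mathcal P_i(\bar z)$ coming from the inner--outer factorization, noting that the arguments of \cite{DouglasKwonTreil,KwonTreil} go through because the Dirichlet multiplier algebra is contained in those for the Hardy and Bergman spaces. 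So the obstacle you flag at the end---transferring the corona/$\bar\partial$ apparatus to $\mathcal D$---is not resolved by redoing that machinery for the Dirichlet space but by the combination of the space-agnostic Treil--Wick step and this multiplier-algebra containment.
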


Statement (3) of the theorem is equivalent to the existence of a bounded function $\vf$ defined on $\mathbb{D}$ with
\begin{equation}
\Delta \vf (\lambda) = \left \bracevert \frac{\partial\Pi(\lambda)}{\partial
\lambda}\right \bracevert ^2_{\mathfrak{S}_2}+\frac{n[\log(1-|\lambda|^2)+|\lambda|^2]}{[\log(1-|\lambda|^2)(1-|\lambda|^2)]^2}
\end{equation}
for all $\lambda \in \D$. These two claims amount to showing that the corresponding Green potential
$$
\mathcal G(\la) := \frac2\pi\iint_\D \log \left|
\frac{z-\la}{1-\overline \la z}\right|  (\left \bracevert
\frac{\partial\Pi(\la)}{\partial \la} \right \bracevert
^2_{\mathfrak{S}_2}+\frac{n[\log(1-|\lambda|^2)+|\lambda|^2]}{[\log(1-|\lambda|^2)(1-|\lambda|^2)]^2 }) dA(z)
$$
is uniformly bounded inside $\D$ ($dA$ is the area measure on $\mathbb{D}$). That statement (1) of the theorem implies statement (2) is obvious and in order to prove the theorem, we will show the implications $(2) \rightarrow (3)$ and $(3) \rightarrow (1)$.

\section{Computing the eigevector bundle of $T$}
Let of first consider the following model theorem by V. Muller \cite{Muller} that plays the role of the theorems of J. Agler \cite{Agler1}, \cite{Agler2}. Recall that $D^*$ is of the form
$$S_{\alpha}(x_1, x_2, \cdots )=({\frac{\sqrt 2}{\sqrt 1}} x_1, {\frac{\sqrt 3}{\sqrt 2}} x_2, {\frac{\sqrt 4}{\sqrt 3}} x_3, \cdots),$$
with the corresponding weight sequence being ${\alpha}=({\alpha}_n)_{n=1}^{\infty}=(\frac{\sqrt{n+1}}{\sqrt n})_{n=1}^{\infty}$.

\begin{thm}
Let ${\alpha}=({\alpha}_n)_{n=1}^{\infty}$ be such that $\alpha_j \geq \alpha_{j+1} > 0$ for all $j \geq 1$. For $T \in \mathcal{H}$, there exists a $S_{\alpha}-$ invariant subspace $\mathcal{K}$ such that $T$ is unitarily equivalent to $S_{\alpha}|\mathcal{K}$ if and only if
$\sum_{n=1}^{\infty}\|T\|^2 b_n \leq 1$, where for $n \geq 1$, $b_n=\alpha^{-2}_n \cdots \alpha^{-2}_1$.
\end{thm}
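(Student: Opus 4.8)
The plan is to recast the model as the existence of a single positive operator. Let $S_\alpha$ act on the Hilbert space $\ell^2_{\mathcal E}$ of square-summable sequences $(x_0,x_1,x_2,\dots)$ with entries in an auxiliary Hilbert space $\mathcal E$ by $S_\alpha(x_0,x_1,\dots)=(\alpha_1x_1,\alpha_2x_2,\dots)$, and put $b_0:=1$, so that $\alpha_{n+1}\sqrt{b_{n+1}}=\sqrt{b_n}$ for all $n\ge0$. First I would observe that $T$ is unitarily equivalent to $S_\alpha|_{\mathcal K}$ for some closed $S_\alpha$-invariant $\mathcal K\subseteq\ell^2_{\mathcal E}$ precisely when there is an isometry $V\colon\mathcal H\to\ell^2_{\mathcal E}$ with $S_\alpha V=VT$ (take $\mathcal K=V\mathcal H$). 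Writing $v_n:=\pi_nV\colon\mathcal H\to\mathcal E$ for the coordinate maps of $V$ and comparing entries in $S_\alpha Vx=VTx$ forces the recursion $\alpha_{n+1}v_{n+1}=v_nT$, hence $v_n=\sqrt{b_n}\,v_0T^n$; the isometry condition $\sum_{n\ge0}\|v_nx\|^2=\|x\|^2$ then reads
$$
\sum_{n=0}^{\infty}b_n\,T^{*n}QT^n=I,\qquad Q:=v_0^{*}v_0\ge0,
$$
the series converging in the strong operator topology. So the theorem amounts to: such a $Q\ge0$ exists iff $\sum_{n\ge1}\|T^n\|^2b_n\le1$, which after isolating the $n=0$ term is the solvability of $Q+\Lambda(Q)=I$ in the positive cone, where $\Lambda(Q):=\sum_{n\ge1}b_nT^{*n}QT^n$.

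For the substantive ($\Leftarrow$) direction, assuming $\sum_{n\ge1}\|T^n\|^2b_n\le1$ I would produce $Q$ as a fixed point of $Q\mapsto I-\Lambda(Q)$. The hypothesis makes $\Lambda$ a completely positive, bounded linear map on $\mathcal L(\mathcal H)$ with $\|\Lambda\|=\|\Lambda(I)\|\le1$, so $0\le\Lambda(I)\le I$. The iteration $Q_0:=I$, $Q_{k+1}:=I-\Lambda(Q_k)$ then stays in $[0,I]$ by positivity and monotonicity of $\Lambda$, its odd-indexed terms increase and its even-indexed terms decrease, and each subsequence converges strongly to operators $Q_-\le Q_+$ in $[0,I]$ (monotone bounded self-adjoint sequences converge, and $\Lambda$ is strongly continuous on bounded sets since its tail $\sum_{n>N}$ is uniformly norm-small). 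Passing to the limit gives $Q_\pm=I-\Lambda(Q_\mp)$, so $Q_+-Q_-=\Lambda(Q_+-Q_-)$; when $\|\Lambda\|<1$ this forces $Q_+=Q_-=:Q$, a solution. For the borderline case $\|\Lambda\|=1$ I would apply the strict case to $rT$ ($0<r<1$), for which $\|\Lambda_r\|\le r^2<1$, obtaining $Q_r\in[0,I]$ with $Q_r+\Lambda_r(Q_r)=I$, and pass to a weak-$*$ cluster point $Q$ of $\{Q_r\}$ as $r\uparrow1$; truncating the series at an $N$ with $\sum_{n>N}b_n\|T^n\|^2$ small bounds the error uniformly in $r$ in both directions and shows $Q+\Lambda(Q)=I$ (and $Q\neq0$, since $\Lambda(0)\neq I$).

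With $Q$ in hand, set $\mathcal E:=\overline{\operatorname{ran}}\,Q^{1/2}$ and $Vx:=\bigl(\sqrt{b_n}\,Q^{1/2}T^nx\bigr)_{n\ge0}$; the operator identity says exactly that $V$ is an isometry, the relation $\alpha_{n+1}\sqrt{b_{n+1}}=\sqrt{b_n}$ gives $S_\alpha V=VT$, and $\mathcal K:=V\mathcal H$ is the sought invariant subspace, so $T\cong S_\alpha|_{\mathcal K}$. The converse implication is the same computation run backwards: a model supplies the intertwining isometry $V$, whose coordinate maps obey $v_n=\sqrt{b_n}v_0T^n$, so $Q=v_0^{*}v_0\ge0$ solves $\sum_nb_nT^{*n}QT^n=I$; in particular $Q\le I$ (the $n=0$ term alone), and pairing with unit vectors returns the corresponding size estimate on $T$.

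The step I expect to be the real obstacle is the borderline case $\|\Lambda\|=1$: the monotone iteration may stabilize at two distinct endpoints $Q_-\neq Q_+$ (a nonzero $\Lambda$-eigenvector for eigenvalue $1$ obstructs uniqueness), so one cannot simply take a fixed point. The fix is to detour through the rescaled operators $rT$ and argue, using the summability $\sum_nb_n\|T^n\|^2<\infty$ to dominate the tail of the defining series uniformly in $r\in(0,1)$, that a weak-$*$ limit point of the $Q_r$ is still a bona fide positive solution of $\sum_nb_nT^{*n}QT^n=I$. The remaining ingredients — manipulating weighted backward shifts and the convergence of monotone nets of positive operators — are routine.
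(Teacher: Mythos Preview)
The paper does not prove this theorem at all; it is quoted from M\"uller \cite{Muller} and used as a black box (only the sufficiency direction is invoked). So there is no in-paper proof to compare against, and I can only comment on your argument itself.

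Your treatment of the sufficiency direction $(\Leftarrow)$ is correct and is in the spirit of M\"uller's original construction: reduce the existence of the model to finding $Q\ge 0$ with $\sum_{n\ge 0} b_n T^{*n}QT^n = I$, and produce $Q$ by an iteration/limit argument. The key estimates are right: positivity of $\Lambda$ gives $\|\Lambda\|=\|\Lambda(I)\|\le 1$, the monotone subsequences converge strongly, $\Lambda$ is strongly continuous on bounded sets because the tail $\sum_{n>N}b_n\|T^n\|^2$ is summable, and in the borderline case the same tail bound---uniform in $r$---lets you pass to a weak-$*$ cluster point of the $Q_r$.

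The necessity direction $(\Rightarrow)$, however, is simply false as stated, and your sentence ``pairing with unit vectors returns the corresponding size estimate on $T$'' does not and cannot establish it. Take $T=S_\alpha$ itself, with $\mathcal K=\ell^2_{\mathcal E}$. Since the weights decrease, $\|S_\alpha^n\|=\alpha_1\cdots\alpha_n=b_n^{-1/2}$, so $\sum_{n\ge 1}\|T^n\|^2 b_n=\sum_{n\ge 1}1=\infty$. What your own reduction actually yields from a model is the \emph{operator} identity $\sum_{n\ge 0}b_n T^{*n}QT^n=I$ for some $0\le Q\le I$; this gives $b_nT^{*n}QT^n\le I$ for each $n$, not the scalar summability $\sum_n b_n\|T^n\|^2\le 1$. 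The ``if and only if'' in the paper's statement is a misquotation of M\"uller's theorem; only the direction you proved carefully is true, and that is the only direction the paper uses.
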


By assumption (1) placed on our operator $T$, $b_n=\frac{1}{n+1}$ and thus $\alpha_n=\frac{\sqrt {n+1}}{\sqrt n}$. We can hence invoke the above theorem to get that $T$ is unitarily equivalent to $D^*\mathcal{E}$ restricted to some invariant subspace of $\mathcal{D}_{\mathcal{E}}$. Note that the condition $\sum_{n=1}^{\infty}\|T\|^2 b_n \leq 1$ enables us to use the above theorem just like the n-hypercontraction assumption in the model theorem due to J. Agler.

We can therefore conclude that the eigenspaces of $T=D^*\big| \mathcal{K}$ are given by
$$
\ker(T-\la) = \{k_{\overline\la} e: e\in \mathcal{E}(\la)\},
$$
where $\mathcal{E}(\la)$ is the subspace defined by
$$
\mathcal{E}(\lambda)=\{e \in \mathcal{E}; k_{\bar{\la}}e \in \mathcal{K}\}.
$$
Assumption \cond3 of $\ker(T-\la)$ being a holomorphic vector bundle implies that that the family of subspaces $E(\la)$ is again a holomorphic vector bundle over $\mathbb{D}$.

The vector-valued Dirichlet space $\mathcal{D}_{\mathcal{E}}$ can be realized as the tensor product $\mathcal{D} \otimes \mathcal{E}$ and we can then express the eigenvector bundle of $T$ as
$$
\ker(T-\la) = \bigvee_{\lambda \in \mathbb{D}} \{k_{\overline{\la}}\} \otimes \mathcal{E}(\la).
$$
This tensor product form of the eigenvector bundle allows us to represent $\Pi(\la)$, the orthogonal projection onto $\ker(T-\la)$ as a tensor product of the operators $\Pi_(\la)$ and $\Pi_2(\la)$, which are orthogonal projections from $\mathcal{D}$ onto $\bigvee_{\la \in \mathbb{D}} \{k_{\overline{\la}}\}$ and from $\mathcal{E}$ onto $\mathcal{E}(\la)$, respectively:
\begin{equation}
\Pi(\la)=\Pi_1(\la)\otimes\Pi_2(\la).
\end{equation}

We will now see from the theorem stated below that the proof of the main theorem really depends on the second part $\Pi_2(\lambda)$ of $\Pi(\lambda)$. We first note that $\rank \Pi(\lambda)=\rank \Pi_2(\lambda)=n$ and that $\rank \Pi_1(\lambda)=1$.
\begin{thm}
\begin{align*}
{\left \bracevert \frac{\partial\Pi(\lambda)}{\partial\lambda}\right \bracevert}^2_{\mathfrak{S}_2} & =
{\left \bracevert \frac{\partial\Pi_1(\lambda)}{\partial\lambda}\right \bracevert}^2_{\mathfrak{S}_2} +
{\left \bracevert \frac{\partial\Pi_2(\lambda)}{\partial\lambda}\right \bracevert}^2_{\mathfrak{S}_2}
\\
& =
-\frac{n[\log(1-|\lambda|^2)+|\lambda|^2]}{[\log(1-|\lambda|^2)(1-|\lambda|^2)]^2}\, +
{\left \bracevert \frac{\partial\Pi_2(\lambda)}{\partial\lambda}\right \bracevert}^2_{\mathfrak{S}_2}.
\end{align*}
\end{thm}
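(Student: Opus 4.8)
\emph{Proof strategy.} The plan is to differentiate the tensor factorization $\Pi(\la)=\Pi_1(\la)\otimes\Pi_2(\la)$ obtained above, expand the resulting Hilbert--Schmidt norm, and verify that the cross term vanishes. First I would apply the Leibniz rule,
\begin{equation*}
\frac{\p\Pi(\la)}{\p\la}=\frac{\p\Pi_1(\la)}{\p\la}\otimes\Pi_2(\la)+\Pi_1(\la)\otimes\frac{\p\Pi_2(\la)}{\p\la},
\end{equation*}
which is legitimate since each $\Pi_j(\la)$ is of finite rank and depends smoothly on $\la$ (hence differentiably in trace norm). Squaring and using that the $\mathfrak{S}_2$-inner product factors on elementary tensors, $\langle A_1\otimes A_2,B_1\otimes B_2\rangle_{\mathfrak{S}_2}=\langle A_1,B_1\rangle_{\mathfrak{S}_2}\langle A_2,B_2\rangle_{\mathfrak{S}_2}$, the norm splits into the two ``diagonal'' terms together with the cross term $2\re\bigl(\langle\p_\la\Pi_1,\Pi_1\rangle_{\mathfrak{S}_2}\,\langle\Pi_2,\p_\la\Pi_2\rangle_{\mathfrak{S}_2}\bigr)$.

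The point is that this cross term is zero. Differentiating the idempotent relation $\Pi_1(\la)^2=\Pi_1(\la)$ gives $(\p_\la\Pi_1)\Pi_1+\Pi_1(\p_\la\Pi_1)=\p_\la\Pi_1$; taking the trace, using cyclicity together with $\tr(\p_\la\Pi_1)=\p_\la\tr\Pi_1=\p_\la(\rank\Pi_1)=0$, one obtains $\langle\p_\la\Pi_1,\Pi_1\rangle_{\mathfrak{S}_2}=\tr(\Pi_1\,\p_\la\Pi_1)=0$. Hence, by multiplicativity of the $\mathfrak{S}_2$-norm on elementary tensors,
\begin{equation*}
\left\bracevert\frac{\p\Pi(\la)}{\p\la}\right\bracevert^2_{\mathfrak{S}_2}=\left\bracevert\frac{\p\Pi_1(\la)}{\p\la}\right\bracevert^2_{\mathfrak{S}_2}\left\bracevert\Pi_2(\la)\right\bracevert^2_{\mathfrak{S}_2}+\left\bracevert\Pi_1(\la)\right\bracevert^2_{\mathfrak{S}_2}\left\bracevert\frac{\p\Pi_2(\la)}{\p\la}\right\bracevert^2_{\mathfrak{S}_2},
\end{equation*}
and since $\left\bracevert\Pi_1(\la)\right\bracevert^2_{\mathfrak{S}_2}=\tr\Pi_1(\la)=1$ and $\left\bracevert\Pi_2(\la)\right\bracevert^2_{\mathfrak{S}_2}=\tr\Pi_2(\la)=n$, this is the first asserted equality, with the understanding that the ``$\Pi_1$'' summand denotes its contribution $n\left\bracevert\p_\la\Pi_1(\la)\right\bracevert^2_{\mathfrak{S}_2}$ as an operator on $\mathcal{D}_n$.

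It then remains to evaluate $\left\bracevert\p_\la\Pi_1(\la)\right\bracevert^2_{\mathfrak{S}_2}$. Here $\Pi_1(\la)$ is the rank-one orthogonal projection of the scalar Dirichlet space onto $\C\,k_{\bar\la}$, and $\la\mapsto k_{\bar\la}$ is a nonvanishing holomorphic frame for the line bundle it cuts out; for such a projection one has the elementary identity
\begin{equation*}
\left\bracevert\frac{\p\Pi_1(\la)}{\p\la}\right\bracevert^2_{\mathfrak{S}_2}=\p_\la\p_{\bar\la}\log\|k_{\bar\la}\|^2_{\mathcal{D}}=\Delta\log\|k_{\bar\la}\|^2_{\mathcal{D}},
\end{equation*}
$\Delta$ being the normalized Laplacian, which one checks by writing $\Pi_1=\hat k_{\bar\la}\otimes\hat k_{\bar\la}$ for the normalized kernel $\hat k_{\bar\la}$ and differentiating (equivalently, it is the rank-one Cowen--Douglas curvature identity used in \cite{DouglasKwonTreil}, \cite{KwonTreil}). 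Finally, from the expansion $k_{\bar\la}(z)=\sum_{n\ge0}\frac{\la^n z^n}{n+1}$ one computes
\begin{equation*}
\|k_{\bar\la}\|^2_{\mathcal{D}}=\sum_{n\ge0}\frac{|\la|^{2n}}{n+1}=-\frac{\log(1-|\la|^2)}{|\la|^2},
\end{equation*}
so that $\log\|k_{\bar\la}\|^2_{\mathcal{D}}=\log\bigl(-\log(1-|\la|^2)\bigr)-\log|\la|^2$; the pluriharmonic term $\log|\la|^2$ is annihilated by $\Delta$, and a direct chain-rule computation in the variable $t=|\la|^2$ gives
\begin{equation*}
\Delta\log\bigl(-\log(1-|\la|^2)\bigr)=-\frac{\log(1-|\la|^2)+|\la|^2}{[(1-|\la|^2)\log(1-|\la|^2)]^2}.
\end{equation*}
Multiplying by $n$ and inserting into the factorization yields the second asserted equality.

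The only points requiring care are bookkeeping ones: that the factored derivative, the traces, and the $\mathfrak{S}_2$-norms all make sense (guaranteed by finite rank), that the cross term really drops out, and that one keeps track of the normalization placing the factor $n$ on the $\Pi_1$-term. The closed-form Laplacian computation, though the longest step, is entirely mechanical; the substantive content of the lemma is that the $T$-dependence is concentrated in the single remaining summand $\left\bracevert\p_\la\Pi_2(\la)\right\bracevert^2_{\mathfrak{S}_2}$.
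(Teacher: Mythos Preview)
Your argument is correct and follows essentially the same route as the paper: apply the Leibniz rule to $\Pi=\Pi_1\otimes\Pi_2$, kill the cross term, use $\left\bracevert\Pi_j\right\bracevert^2_{\mathfrak{S}_2}=\rank\Pi_j$ for the diagonal terms, and then evaluate $\left\bracevert\p_\la\Pi_1\right\bracevert^2_{\mathfrak{S}_2}$ via the Cowen--Douglas curvature formula $\Delta\log\|k_{\bar\la}\|^2$. The only minor difference is that the paper annihilates the cross term using the holomorphic-bundle identity $\Pi_2\,\p_\la\Pi_2=0$ coming from assumption~(3), whereas you use the weaker trace fact $\tr(\Pi_1\,\p_\la\Pi_1)=\tfrac12\,\p_\la\tr\Pi_1=0$; both are valid here.
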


\begin{proof}
For the first equality, we use the product rule, the fact that for an orthogonal projection $P$, we have
$\left \bracevert P \right \bracevert ^2_ {\mathfrak{S}_2}=\rank P$, and the identities
$$
\Pi_2(\la) \frac{\p\Pi_2(\la)}{\p \la}=0,
$$
and
$$
(I-\Pi_2(\la))\frac{\p\Pi_2(\la)}{\p \la} \Pi_2(\la) = \frac{\p\Pi_2(\la)}{\p \la},
$$
that follow from assumption (3). For details on these identities, we refer the reader to \cite{DouglasKwonTreil} or \cite{KwonTreil}.
To complete the proof, it suffices to show the identity
$$
\left \bracevert \frac{\p \Pi_1(\la)}{\p \la} \right \bracevert ^2 _{\mathfrak{S}_2}= -\frac{(\log(1-|\lambda|^2)+|\lambda|^2}{[\log(1-|\lambda|^2)(1-|\lambda|^2)]^2}.
$$
A simple method for getting this identity is to note that $-\frac{\p \Pi_1(\la)}{\p \la}$ is the second fundamental form \cite{GriffithsHarris}. Then the curvature of $\ker(T-\la)$ is given by $-\left \bracevert \frac{\p \Pi_1(\la)}{\p \la} \right \bracevert ^2 _{\mathfrak{S}_2}$. But by \cite{CowenDouglas}, the curvature can be calculated via the formula $-\Delta \log \|k_{\la}\|^2$, where $k_{\la}(z)=\frac{1}{\bar{\la}z}\log \frac{1}{1-\bar{\la}{z}}$.

We can also give an alternative proof as follows:

First, the reproducing kernel property of $k_\la$ implies that $\|k_\la\|_2^2 = - \frac{\log (1-|\la|^2)}{|\la|^2}$. Therefore for $f\in \mathcal{D}$,
$$
\Pi_1(\la) f = -\frac{|\la|^2}{\log(1-|\la|^2)}f(\bar\la) k_{\bar\la}.
$$
If we take $\frac{\p}{\p\la}$ and use the fact that $\frac{\p f(\bar\la)}{\p\la}=0$, we obtain
\begin{equation}
\frac{\p\Pi_1(\la)}{\p\la} f = -\frac{f(\bar\la)\bar{\la}}{[\log (1-|\la|^2)]^2} [ \left ( \log(1-|\la|^2)+\frac{|\la|^2}{1-|\la|^2} \right ) k_{\bar\la} + \la \log(1-|\la|^2) \wt k_{\bar{\la}} ],
\end{equation}
where
$$
\wt k_{\bar\la}(z) =\frac{\p}{\p\la} k_{\bar\la}(z) = \frac{\la z^2 +z(1-\la z)\log(1-\la z)}{(1-\la z)(\la z)^2}.
$$

Next we note that for $f\in \mathcal{D}$,

\begin{equation}
\label{repr-k1}
\La f, \widetilde k_{\la}\Ra = f'(\la),
\end{equation}
to get that
$$
\|\wt k_{\la} \|_2^2 = \|\wt k_{\bar\la} \|_2^2= - \frac{(1-|\la|^2)^2 \log(1-|\la|^2)+|\la|^2-2|\la|^4}{(1-|\la|^2)^2|\la|^4}.
$$

We again use the reproducing property for $k_\la$ to show that
$$
\La\wt k_{\bar\la} , k_{\bar\la}\Ra = \frac{|\la|^2 \bar{\la}+\bar{\la}(1-|\la|^2)\log(1-|\la|^2)}{(1-|\la|^2)|\la|^4}.
$$

All of these calculations help us conclude that
$$
\|(\log(1-|\la|^2)+\frac{|\la|^2}{1-|\la|^2})k_{\bar\la} + \la \log(1-|\la|^2) \wt k_{\bar\la}\|_2^2 = \frac{[\log(1-|\la|^2)]^2+|\la|^2\log(1-|\la|^2)}{(1-|\la|^2)^2}.
$$
Thus,
$$
\left \bracevert\frac{\partial\Pi_1(\lambda)}{\partial \lambda}
\right \bracevert^2 = -\frac{\log(1-|\la|^2)+|\la|^2}{[\log(1-|\la|^2)(1-|\la|^2)]^2},
$$
and since
$\rank \frac{\p\Pi_1(\la)}{\p\la}=1$, the operator and the Hilbert--Schmidt norms of  $\frac{\p\Pi_1(\la)}{\p\la}$ are the same.
\end{proof}
Theorem 3.2 lets us rewrite statement (3) of Theorem 2.1 and equation (2.1), where the quantity
$${\left \bracevert \frac{\partial\Pi(\lambda)}{\partial\lambda}\right \bracevert}^2_{\mathfrak{S}_2} +\frac{n[\log(1-|\lambda|^2)+|\lambda|^2]}{[\log(1-|\lambda|^2)(1-|\lambda|^2)]^2}$$
is replaced by
$$
{\left \bracevert \frac{\partial\Pi_2(\lambda)}{\partial\lambda}\right \bracevert}^2_{\mathfrak{S}_2}.$$

\section{Proof of implications}

Let us begin by assuming the existence of  $\Psi$, a uniformly equivalent bundle map bijection. Since $\Psi$ is bundle map, it is an analytic function of $\la$ that takes the  fiber $\ker(D^*-\la)$ to the fiber $\ker(T-\la)$ and is linear in each fiber. We get right away that it should be of the form
$$
\Psi(k_{\bar{\lambda}}e)=k_{\bar{\lambda}} \cdot F(\la)e,
$$
for $F \in
H^{\infty}_{\C^n \rightarrow \mathcal{E}}$, an operator-valued function whose range equals $\mathcal{E}(\la)$, and all $e \in \mathbb{C}^n$.
Moreover, the ``uniform equivalence'' property of $\Psi$ says that
$$
c^{-1} I \le F^*F\le c I,
$$
for all $\la \in \mathbb{D}$. Hence the orthogonal projection
$\Pi_2(\la)$ from $\mathcal{E}$ onto $\mathcal{E}(\la)$ can be represented in terms of this function $F$:
$$
\Pi_2=F(F^*F)^{-1}F^*.
$$
Differentiation shows that $\frac{\partial\Pi_2(\la)}{\partial
\la}=(I-\Pi_2(\la))F'(\la)(F(\la)^*F(\la))^{-1}F(\la)^*$, and hence,
$$
\left \bracevert \frac{\partial\Pi_2(z)}{\partial z} \right \bracevert \leq C\nm F'(z) \nm.
$$
Now we take $\vf(\la)=\left \bracevert F(\la) \right \bracevert ^2 _{\mathfrak{S}_2}$
and note that
$$
\Delta \vf(\la)=\left \bracevert F'(\la) \right \bracevert ^2 _ {\mathfrak{S}_2}
$$
to get statement (3).

Now assuming (3), we want to prove the similarity of $T$ to the backward shift on the Dirichlet space $\mathcal{D}_{\mathcal{E}}$. The usefulness of condition (3) lies in that it makes the projection $\Pi_2(\la)$ analytic via the following Theorem by S. Treil and B. Wick \cite{TreilWick}:
\begin{thm}
If $P$ is a $\mathcal{C}^2$ function defined on $\mathbb{D}$ whose values are orthogonal projections with $P(\la)\frac{\p P(\la)}{\p \la}=0$, then the existence of a bounded subharmonic function $\vf$ satisfying
$$
\Delta \vf(\la) \geq \left \bracevert \frac{\p P(\la)}{\p \la} \right \bracevert ^2,
$$
for all $\la \in \mathbb{D}$, implies the existence of a bounded analytic function $\mathcal{P}$ whose value at $\la \in \mathbb{D}$, $\mathcal{P}(\la)$ is the projection onto $\ran P(\la)$.
\end{thm}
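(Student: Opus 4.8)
This is the theorem of S.~Treil and B.~Wick \cite{TreilWick}, so in the paper one simply invokes \cite{TreilWick}; here is how I would reconstruct the argument. At heart it is a \emph{bounded}-splitting statement for holomorphic vector bundles, with the $\db$-equation as the bridge to analysis. First I would unpack the hypothesis. Since $P=P^*$ is smooth, taking adjoints in $P\,\frac{\p P}{\p\la}=0$ gives $\frac{\p P}{\p\la}\,P=0$, and differentiating $P^2=P$ then yields $\frac{\p P}{\p\bar\la}=P\,\frac{\p P}{\p\bar\la}\,(I-P)$: the antiholomorphic derivative of $P$ lives entirely in the off-diagonal corner $P\,\mathcal L(\mathcal H)\,(I-P)$. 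Consequently $E(\la):=\ran P(\la)$ is a \emph{holomorphic} sub-bundle of the trivial bundle $\D\times\mathcal H$ (it has holomorphic local frames), and $P$ itself is a bounded \emph{smooth} projection onto it; the content of the theorem is that ``smooth'' can be upgraded to ``bounded analytic.''

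Next I would reduce to a concrete $\db$-problem. Look for $\mathcal P$ of the form $\mathcal P(\la)=P(\la)-P(\la)\,u(\la)\,(I-P(\la))$. Any operator of this shape is an idempotent with $\ran\mathcal P(\la)=E(\la)$, so the only remaining requirements are that $u$ be bounded and that $\db\mathcal P=0$. Because $\db P=\frac{\p P}{\p\bar\la}\,d\bar\la$ already sits in the corner $P\,\mathcal L(\mathcal H)\,(I-P)$, expanding $\db\mathcal P=0$ reduces --- up to cross terms, which are themselves bounded corner operators and can be absorbed by an iteration or a final functional-calculus correction --- to producing a \emph{bounded} solution $u\in L^\infty(\D)$ of $\db u=\db P$. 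The point is that $\nm\db P\nm=\nm\tfrac{\p P}{\p\bar\la}\nm=\nm\tfrac{\p P}{\p\la}\nm$ (the last equality because $(\tfrac{\p P}{\p\bar\la})^*=\tfrac{\p P}{\p\la}$ for self-adjoint $P$), so the hypothesis hands us a bounded subharmonic $\vf$ with $\Delta\vf\ge\nm\db P\nm^2$ pointwise. In the paper's application $P=\Pi_2$ has finite rank $n$, so $\db P$ is matrix-valued and this is the classical situation; the general statement needs the operator-valued refinement.

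The analytic core --- and the step I expect to be the genuine obstacle --- is the $L^\infty$-solvability of $\db$ under exactly this hypothesis: if $g$ is an (operator-valued) $(0,1)$-form on $\D$ and some bounded subharmonic $\vf$ satisfies $\Delta\vf\ge\nm g\nm^2$, equivalently if the Green potential $\frac2\pi\iint_\D\log\left|\frac{z-\la}{1-\overline\la z}\right|\,\nm g(z)\nm^2\,dA(z)$ is bounded on $\D$, then $\db u=g$ has a solution $u\in L^\infty(\D)$. I would prove this in the spirit of Wolff's proof of the corona theorem: take the Cauchy-transform solution $u_0(\la)=\frac1\pi\iint_\D\frac{g(z)}{\la-z}\,dA(z)$, which is generally unbounded, and subtract an analytic correction; the bound then follows by duality with $H^1$ from the estimate $\iint_\D\nm g(z)\nm\,|h(z)|\,dA(z)\lesssim\|\vf\|_\infty\,\|h\|_{H^1}$ --- the area-function / Littlewood--Paley description of $H^1$ combined with the subharmonic majorant --- or, equivalently, from a H\"ormander-type $L^2(e^{-2\vf})$ estimate for $\db$ and Cauchy--Schwarz. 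The matrix- and then operator-valued version of this $\db$-lemma is precisely where the difficulty of \cite{TreilWick} lies. Once a bounded $u$ is produced, assembling $\mathcal P$ and checking $\mathcal P(\la)^2=\mathcal P(\la)$ and $\ran\mathcal P(\la)=\ran P(\la)$ is routine corner algebra, with a final $\frac1{2\pi i}\oint_{|z-1|=1/2}(z-\mathcal P_0(\la))^{-1}\,dz$ available to straighten an approximate idempotent into an exact one if the bookkeeping above leaves one, and this completes the proof.
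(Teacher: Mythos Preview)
You are right that the paper does not prove this statement: it is simply quoted and attributed to Treil--Wick \cite{TreilWick}, with no argument supplied, and is then applied to $\Pi_2$. So as far as matching the paper goes, your first sentence already does the job.

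Your added reconstruction is in the right spirit --- interpret $P(\la)\,\p P/\p\la=0$ as saying $\ran P(\la)$ is a holomorphic sub-bundle, seek an analytic idempotent of the form $P-Pu(I-P)$, and reduce analyticity to an $L^\infty$ $\db$-problem whose data are controlled by the hypothesis $\Delta\vf\ge\nm\p P/\p\la\nm^2$ (equivalently, a bounded Green potential). The one place I would flag is the step ``$\db\mathcal P=0$ reduces --- up to cross terms --- to $\db u=\db P$'': with the ansatz $\mathcal P=P-Pu(I-P)$ those cross terms do not simply absorb, and getting from the corner identity $\db P=P(\db P)(I-P)$ to a genuinely solvable equation for $u$ takes a more careful choice of unknown (and, in the operator-valued case, the nontrivial $\db$-estimates that are the real substance of \cite{TreilWick}). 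You acknowledge this, so the sketch is honest; just be aware that this is precisely where the difficulty lives, not a bookkeeping afterthought.
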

We proceed as in \cite{DouglasKwonTreil} and \cite{KwonTreil} and apply the theorem to $\Pi_2$. We obtain a bounded, analytic projection $\mathcal{P}(\la)$ onto $\ran \Pi_2(\la)=\mathcal{E}(\la)$ as a result and consider the inner-outer factorization of $\mathcal{P}$. We then take the inner function $\mathcal{P}_i$ and form a Toeplitz operator $T_{\mathcal{Q}}$, where the symbol $\mathcal{Q}$ is defined by
$$
\mathcal{Q}(z)=\mathcal{P}_i(\bar{z})
$$
for $z \in \mathbb{D}$. This Toeplitz operator will be the operator $A$ establishing similarity. Note that the exact same arguments used in the Hardy and the Bergman settings can be used since the multiplier algebra for the Dirichlet space is contained in that for these other spaces. For details, we refer the reader to \cite{DouglasKwonTreil} and \cite{KwonTreil}.

\def\cprime{$'$}
\providecommand{\bysame}{\leavevmode\hbox
to3em{\hrulefill}\thinspace}

\end{document}